\documentclass[orivec]{llncs}
\let\gid\ggid

\usepackage{amsmath,amssymb,latexsym}
\usepackage[all]{xy}
\usepackage{bussproofs}
  \EnableBpAbbreviations
\usepackage{color}
  \definecolor{darkblue}{rgb}{0.0,0.0,0.3}

\usepackage[numbers]{natbib}

\usepackage[bookmarks=true,pdfencoding=auto]{hyperref}
\hypersetup{pdfauthor={Tudor Protopopescu}, pdftitle={An Arithmetical Interpretation of Verification and Intuitionistic Knowledge}}
\usepackage[USenglish]{babel}
\usepackage{csquotes}
\usepackage{graphicx}
\usepackage{float}
\usepackage{fixltx2e}
\usepackage[inline]{enumitem}
\setlist{topsep=1ex}
\usepackage[capitalise]{cleveref} 
\spnewtheorem{customcase}[case]{Case}{\itshape}{ }
\newenvironment{ccase}[1]{\customcase}{\endcustomcase}
\newlist{axiomlist}{enumerate}{1}
\setlist[axiomlist]{label=A\arabic*.,noitemsep,start=0,leftmargin=*,widest=A5.,ref=A\arabic*}
\crefname{axiom}{Axiom}{Axioms}
\Crefname{axiom}{Axiom}{Axioms}
\crefname{rule}{Rule}{Rules}
\Crefname{rule}{Rule}{Rules}

\makeatletter
\@addtoreset{case}{theorem}
\@addtoreset{case}{lemma}
\@addtoreset{case}{proposition}
\@addtoreset{case}{corollary}
\makeatother


\newcommand{\imp}{\rightarrow}

\newcommand{\Imp}{\Rightarrow}

\newcommand{\BImp}{\Leftrightarrow}

\newcommand{\proves}{\vdash}

\newcommand{\bx}{\Box}
\newcommand{\bk}{{\bf K}}
\newcommand{\bv}{{\bf V}}

\newcommand{\sub}{\subseteq}

\newcommand{\prf}{\ensuremath{\mathsf{Prf}}}
\newcommand{\bew}{\ensuremath{\mathsf{Bew}}}
\newcommand{\ver}{\ensuremath{\mathsf{Ver}}}


\newcommand{\siv}{\ensuremath{\mathsf{S4}}}
\newcommand{\sivv}{\ensuremath{\mathsf{S4V}}}
\newcommand{\sivvg}{\ensuremath{\mathsf{S4Vg}}}

\newcommand{\sivvm}{\ensuremath{\mathsf{S4V^-}}}

\newcommand{\sivvmg}{\ensuremath{\mathsf{S4V^-g}}}

\newcommand{\LP}{\ensuremath{\mathsf{LP}}}
\newcommand{\LPV}{\ensuremath{\mathsf{LPV}}}

\newcommand{\lpv}{\ensuremath{\mathsf{LPV}}}
\newcommand{\lpvm}{\ensuremath{\mathsf{LPV^-}}}

\newcommand{\cs}{\ensuremath{\mathcal{CS}}}
\newcommand{\IPC}{\ensuremath{\mathsf{IPC}}}
\newcommand{\iel}{\ensuremath{\mathsf{IEL}}}
\newcommand{\ielm}{\ensuremath{\mathsf{IEL^-}}}

\newcommand{\PA}{\ensuremath{\mathsf{PA}}}

\newcommand{\conpa}{\ensuremath{\mathsf{Con(PA)}}}

\newcommand{\mct}[1]{\mathcal{#1}}

 
\newcommand{\sft}[1]{\textsf{#1}}


\renewcommand{\gg}{\Gamma}
\newcommand{\gd}{\Delta}

\newcommand{\gid}{\gg \Imp \gd}

\newcommand{\gn}[1]{\ulcorner {#1} \urcorner}

\begin{document}

\mainmatter

\title{An Arithmetical Interpretation of Verification and Intuitionistic Knowledge}
\author{Tudor Protopopescu
  \thanks{In \emph{Logical Foundations of Computer Science}, Lecture Notes in Computer Science 9537, 317--330, Springer, 2016.
Due to several publisher's errors the publisher failed to include a corrected version of the paper for publication. 
The paper published in LNCS 9537 is an uncorrected proof which contains various errors introduced by the publisher, and lacks changes submitted to, and approved for inclusion by, the publisher.
This is the version that should have been published, and should be considered the official one.
}}
\date{}
\institute{The Graduate Center, City University of New York\\
\email{tprotopopescu@gradcenter.cuny.edu}}
\maketitle

\begin{abstract}
Intuitionistic epistemic logic introduces an epistemic operator, which reflects the intended BHK semantics of intuitionism, to intuitionistic logic.
The fundamental assumption concerning intuitionistic knowledge and belief is that it is the product of verification.
The BHK interpretation of intuitionistic logic has a precise formulation in the Logic of Proofs and its arithmetical semantics.
We show here that this interpretation can be extended to the notion of verification upon which intuitionistic knowledge is based, thereby providing the systems of intuitionistic epistemic logic extended by an epistemic operator based on verification with an arithmetical semantics too. 
\keywords{Intuitionistic Epistemic Logic,
Logic of Proofs,
Arithmetic Interpretation,
Intuitionistic Knowledge,
BHK Semantics,
Verification}
\end{abstract}

\section{Introduction}\label{intro}

The intended semantics for intuitionistic logic is the Brouwer-\-Hey\-ting-\-Kol\-mo\-gorov (BHK) interpretation, which holds that a proposition is true if proved.
The systems of intuitionistic epistemic logic, the \iel\ family introduced in \cite{Artemov2014c}, extend intuitionistic logic with an epistemic operator and interpret it in a manner reflecting the BHK semantics.
The fundamental assumption concerning knowledge interpreted intuitionistically is that knowledge is the product of verification, where a verification is understood to be a justification sufficient to warrant a claim to knowledge which is not necessarily a strict proof.

In \cite{Artemov2014c} the notion of verification was treated intuitively.
Here we show that verification can also be given an arithmetical interpretation, thereby showing that the notion of verification assumed in an intuitionistic interpretation of knowledge has an exact model.

Following G\"odel \cite{Godel1933} it is well known that intuitionistic logic can be embedded into the classical modal logic \siv\ regarded as a provability logic.
Artemov \cite{Artemov2001} formulated the Logic of Proofs, \LP, and showed that \siv\ in turn can be interpreted in \LP, and that \LP\ has an arithmetical interpretation as a calculus of explicit proofs in Peano Arithmetic \PA.
\footnote{As opposed to provability in \PA, the calculus of which is the modal logic \sft{GL}, see \cite{Boolos1993}.} 
Accordingly this makes precise the BHK semantics for intuitionistic logic.
Intuitionistic logic, then, can be regarded as an implicit logic of proofs, and its extension with an epistemic/verification operator in the systems \ielm\ and \iel\ (given in \cref{sec:iel}) can be regarded as logics of implicit proofs, verification and their interaction.

This is of interest for a number of reasons.
It shows that the notion of verification on which intuitionistic epistemic logic is based is coherent and can be made concrete, and does so in a manner consonant with the intended BHK interpretation of the epistemic operator.
Further, given intuitionistic logic's importance in computer science as well as the need for a constructive theory of knowledge, finding a precise provability model for verification and intuitionistic epistemic logic (see \cref{sec:arithmetic-semantics}) is well-motivated.

\section{Intuitionistic Epistemic Logic}
\label{sec:iel}

According to the BHK semantics a proposition, $A$, is true if there is a proof of it and false if the assumption that there is a proof of $A$ yields a contradiction.
This is extended to complex propositions by the following clauses:

\begin{itemize}[noitemsep]
\item a proof of $A \land B$ consists in a proof of $A$ and a proof of $B$;
\item a proof of $A \lor B$ consists in giving either a proof of $A$ or a proof of $B$;
\item a proof of $A \imp B$ consists in a construction which given a proof of $A$ returns a proof of $B$;
\item $\neg A$ is an abbreviation for $A \imp \bot$, and $\bot$ is a proposition that has no proof.
\end{itemize}

The salient property of verification-based justification, in the context of the BHK semantics, is that it follows from intuitionistic truth, hence 
\[
A \imp \bk A\tag{Co-Reflection}
\]
is valid on a BHK reading. 
Since any proof is a verification, the intuitionistic truth of a proposition yields that the proposition is verified.

By similar reasoning the converse principle 
\[
\bk A \imp A\tag{Reflection}
\]
is not valid on a BHK reading.
A verification need not be, or yield a method for obtaining, a proof, hence does not guarantee the intuitionistic truth of a proposition. 
Reflection expresses the factivity of knowledge in a classical language, intuitionistically factivity is expressed by
\[
\phantom{Intuitionistic Factivity}\bk A \imp \neg\neg A.\tag{Intuitionistic Factivity}
\]

The basic system of intuitionistic epistemic logic, incorporating minimal assumptions about the nature of verification, is the system \ielm.
\ielm\ can be seen as the system formalising intuitionistic belief.

\begin{definition}[\ielm] \ 
The list of axioms and rules of \ielm\ consists of:

\begin{enumerate}[noitemsep,labelindent=\parindent,label=IE\arabic*.,series=ielaxioms,start=0,leftmargin=*,widest=IE5.,align=left,ref=IE\arabic*]

 \item Axioms of propositional intuitionistic logic.

 \item $\bk(A \imp B) \imp (\bk A \imp \bk B)$

 \item\label[axiom]{ax:coref} $A \imp \bk A$

\end{enumerate}
Modus Ponens. 
\end{definition}

It is consistent with \ielm\ that false propositions can be verified.
It is desirable, however, that false propositions not be verifiable; to be a logic of knowledge the logic should reflect the truth condition on knowledge, i.e.\ factivity -- that it is not possible to know falsehoods.
The system \iel\ incorporates the truth condition and hence can be viewed as an intuitionistic logic of knowledge.

\begin{definition}[\iel]
The list of axioms and rules for \iel\ are those for \ielm\ with the additional axiom:

\begin{enumerate}[resume*=ielaxioms]
\item\label[axiom]{ax:tcond} $\bk A \imp \neg \neg A$.

\end{enumerate}

\end{definition}
Given \cref{ax:coref} the idea that it is not possible to know a falsehood can be equivalently expressed by $\neg \bk \bot$.
\footnote{Or indeed, $\neg(\bk A \land \neg A)$, $\neg A \imp \neg \bk A$ or $\neg\neg(\bk \imp A)$, all are equivalent to \cref{ax:tcond} given \cref{ax:coref}, see \cite{Artemov2014c}.} 
For the following we will use this form of the truth condition in place of \cref{ax:tcond}.

Kripke models were defined for both systems, and soundness and completeness shown with respect to them, see \cite{Artemov2014c}.

\section{Embedding Intuitionistic Epistemic Logic into Classical Modal Logic of Verification}
\label{sec:iel2cl}

The well known G\"odel translation yields a faithful embedding of the intuitionistic propositional calculus, \IPC, into the classical modal logic \siv.
\footnote{The soundness of the translation was proved by G\"odel \cite{Godel1933} while the faithfulness was proved by McKinsey and Tarski \cite{McKinsey1948}.
See \cite{Chagrov1997} for a semantic, and \cite{Troelstra2000} for a syntactic proof.} 
By extending \siv\ with a verification modality $\bv$, the embedding can be extended to \ielm\ and \iel, and shown to remain faithful, see \cite{Protopopescu2015}.


\sivvm\ is the basic logic of provability and verification.

\begin{definition}[\sivvm\ Axioms]\label{df:s4vmAxioms}
The list of axioms and rules of \sivvm\ consists of:

\begin{enumerate}[noitemsep,labelindent=\parindent,label=A\arabic*.,series=axioms,start=0,leftmargin=*,widest=A5.,align=left,ref=A\arabic*]
\item\label[axiom]{ax:cl} The axioms of \siv\ for $\bx$.
\item\label[axiom]{ax:bvK} $\bv(A \imp B) \imp (\bv A \imp \bv B)$
\item\label[axiom]{ax:inter} $\bx A\imp \bv A$
\end{enumerate}

\begin{enumerate}[labelindent=\parindent,label=R\arabic*.,leftmargin=*,widest=R3,align=left]
\item Modus Ponens 
\item $\bx$-Necessitation \AX$~\fCenter\proves A$\UI$\fCenter\proves \bx A.$\DP

\end{enumerate}

\end{definition}

As with \iel\ we add the further condition that verifications should be consistent.

\begin{definition}[\sivv]
\sivv\ is \sivvm\ with the additional axiom:
\footnote{\cite{Protopopescu2015} presented a stronger version of \sivv\ with $\neg \bv \bot$ instead of $\neg\bx\bv\bot$.
The weaker axiom presented here is sufficient for the embedding; one can readily check that the G\"odel translation of $\neg\bk\bot$, $\bx\neg\bx\bv\bx\bot$, is derivable in \sivv\ as formulated here.
The weaker axiom allows for a uniform arithmetical interpretation of verification. } 

\begin{enumerate}[noitemsep,resume*=axioms]

\item\label[axiom]{ax:cssty} $\neg \bx \bv \bot$.

\end{enumerate}

\end{definition}

Kripke models for each system were outlined in \cite{Protopopescu2015} and the systems shown to be sound and complete with respect to them.


For \ielm\ and \iel\ their embedding into \sivvm\ and \sivv\ respectively, is faithful.
For an \ielm\ or \iel\ formula $F$, $tr(F)$ is the translation of $F$ according to the rule 
\begin{center}
\emph{box every sub-formula} 
\end{center}
into the language of \sivvm\ or \sivv\ respectively.

\begin{theorem}[Embedding]\label{thm:embedding}
The G\"odel translation faithfully embeds \ielm and \iel\ into \sivvm and \sivv, respectively:
\[
\ielm, \iel \proves F \BImp \sivvm, \sivv \proves tr(F).
\]

\end{theorem}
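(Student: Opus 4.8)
The plan is to prove the biconditional in two directions. Throughout I use that the translation $tr$ boxes every subformula: $tr(p)=\bx p$ for atomic $p$, $tr(\bot)=\bx\bot$, $tr(A\circ B)=\bx(tr(A)\circ tr(B))$ for each binary connective $\circ$, and — the decisive clause — $tr(\bk A)=\bx\bv\,tr(A)$, so that intuitionistic knowledge is read as \emph{provable verification}. A structural fact I exploit repeatedly is that every translate has the shape $\bx G$; I write $tr(A)=\bx G_A$.

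For the forward (soundness) direction I would induct on the length of an \ielm- (resp.\ \iel-) derivation of $F$, the only work being the axioms. The propositional axioms reduce to the classical G\"odel--McKinsey--Tarski fact that $tr$ sends \IPC-theorems to \siv-theorems. The normality axiom IE1 follows from the normality of $\bx$ together with \cref{ax:bvK} and \cref{ax:inter}. The interesting case is co-reflection, \cref{ax:coref}, where $tr(A\imp\bk A)=\bx(\bx G_A\imp\bx\bv\,\bx G_A)$: one iterates the \siv\ transitivity axiom $\bx G_A\imp\bx\bx G_A$ and applies the interaction axiom $\bx C\imp\bv C$ (\cref{ax:inter}) with $C=\bx G_A$ under $\bx$-necessitation and normality to derive $\bx G_A\imp\bx\bv\,\bx G_A$, then boxes it. For \iel\ the truth condition $\neg\bk\bot$ translates to $\bx\neg\bx\bv\bx\bot$, which reduces to \cref{ax:cssty} because $\bx\bot$ and $\bot$ are \siv-equivalent (as already noted in the footnote to \cref{ax:cssty}). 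Finally Modus Ponens is preserved because $\bx$-reflection turns $tr(A\imp B)=\bx(tr(A)\imp tr(B))$ into $tr(A)\imp tr(B)$, ready for detachment.

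For the converse (faithfulness) direction I would argue semantically by contraposition, using Kripke completeness of the \iel-family and soundness of the \sivv-family recalled above. Suppose $\ielm\nproves F$ (resp.\ $\iel\nproves F$); completeness yields an intuitionistic-epistemic model $M=(W,R,E,V)$ and a world $w$ with $M,w\nforces F$, where the frame conditions give $E\sub R$ (behind co-reflection) and $R\circ E=E$ (reflexivity together with the monotonicity of $\bk$), and for \iel\ the relation $E$ is moreover serial. I would read the same structure as an \sivvm- (resp.\ \sivv-) model $M'$ by putting $R_\bx:=R$, already an \siv\ frame, and $R_\bv:=E$: the inclusion $E\sub R$ is exactly the condition matching \cref{ax:inter}, and seriality of $E$ matches the condition behind \cref{ax:cssty}. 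The core is a truth lemma proved by induction on $A$, namely $M,w\forces A$ iff $M',w\models tr(A)$. The propositional steps are the usual persistence bookkeeping, while the $\bk$-clause is where the choice pays off: $M',w\models\bx\bv\,tr(A)$ unfolds to "$v\models tr(A)$ for every $v$ with $w\,(R_\bx\circ R_\bv)\,v$", and since $R_\bx\circ R_\bv=R\circ E=E$ this is precisely the \iel-clause $M,w\forces\bk A$. With the truth lemma, $M',w\not\models tr(F)$, and soundness of \sivvm\ (resp.\ \sivv) gives $\sivvm\nproves tr(F)$ (resp.\ $\sivv\nproves tr(F)$).

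I expect the crux to be the faithfulness direction, and within it the precise matching of frame conditions: confirming that the \iel-semantics really forces $E\sub R$ and $R\circ E=E$ — so that $E$ may serve as $R_\bv$ and $\bx\bv$ collapses to the knowledge relation in the $\bk$-case of the truth lemma — and that \iel's consistency condition on $E$ delivers exactly \cref{ax:cssty}. The soundness direction, by contrast, should be a routine induction once one observes that every translate is of the form $\bx G$, which is what powers the co-reflection case.
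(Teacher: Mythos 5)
Your proposal is correct, and it is essentially the paper's own argument: the paper proves \cref{thm:embedding} only by citing \cite{Protopopescu2015}, where the theorem is established just as you do it --- soundness by induction on \ielm/\iel\ derivations (co-reflection via the $4$-axiom, \cref{ax:inter} and necessitation; the truth condition via \cref{ax:cssty}), and faithfulness by contraposition, reading an \ielm/\iel\ countermodel $(W,R,E)$ as an \sivvm/\sivv\ model with $R_\bx:=R$ and $R_\bv:=E$, with the truth lemma turning on $E\sub R$, $R\circ E=E$, and seriality of $E$. A small bonus of your version: since the constructed model has $E$ serial, it validates even the stronger axiom $\neg\bv\bot$ of \cite{Protopopescu2015}, so your argument establishes both directions directly for the weaker formulation of \sivv\ adopted in this paper, which the paper itself only bridges via the footnote to \cref{ax:cssty}.
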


\begin{proof}
See \cite{Protopopescu2015}.
\end{proof}



\section{Logics of explicit proofs and verification}\label{sec:LPV}

G\"odel \cite{Godel1933} suggested that the modal logic \siv\ be considered as a provability calculus.
This was given a precise interpretation by Artemov, see \cite{Artemov2001,Artemov2008}, who showed that explicit proofs in Peano Arithmetic, \PA, was the model of provability which \siv\ described.
The explicit counter-part of \siv\ is the Logic of Proofs \LP\ in which each $\bx$ in \siv\ is replaced by a term denoting an explicit proof.
Since intuitionistic logic embeds into \siv\ the intended BHK semantics for \IPC\ as an implicit calculus of proofs is given an explicit formulation in \LP, and hence an arithmetical semantics.
Here we show that this arithmetical interpretation can be further extended to the Logic of Proofs augmented with a verification modality, providing \sivvm\ and \sivv, and therefore \ielm\ and \iel\ with an arithmetical semantics.
Similarly to the foundational picture regarding the relation between \IPC, \siv\ and \LP\ (see \cite{Artemov2001}) we have that 

\[\iel \ \hookrightarrow \ \sivv \ \hookrightarrow \ \LPV
\footnote{\text{Similar embeddings hold for \ielm, \sivvm, and \lpvm.}} 
\]

The basic system of explicit proofs and verifications \lpvm\ is defined thus:

\begin{definition}[Explicit Language]\label{df:lpvlanguage}

The language of \lpvm\ consists of: 
\begin{enumerate}[noitemsep]
\item The language of classical propositional logic;
\item A verification operator $\bv$;
\item \emph{Proof variables}, denoted by $x, y, x_1, x_2 \ldots$; \item \emph{Proof constants}, denoted by $a, b, c, c_1, c_2 \ldots$;
\item \emph{Operations on proof terms}, building complex proof terms from simpler ones of three types:
\begin{enumerate}[noitemsep]
\item Binary operation $\cdot$ called \emph{application}; 
\item Binary operation $+$ called \emph{plus}; 
\item Unary operation $!$ called \emph{proof checker}; 
\end{enumerate}

\item \emph{Proof terms}: any proof variable or constant is a proof term; 
if $t$ and $s$ are proof terms so are $t\cdot s$, $t+s$ and $!t$.

\item \emph{Formulas}: A propositional letter $p$ is a formula;
if $A$ and $B$ are formulas then so are $\neg A$, $A \land B$, $A \lor B$, $A \imp B$, $\bv A$, $t{:}A$.

\end{enumerate}
  
\end{definition}

Formulas of the type $t{:}A$ are read as ``\emph{$t$ is a proof $A$}''.

\begin{definition}[\lpvm]\label{df:lpvmaxioms} The list of axioms and rules of \lpvm\ consists of:

\begin{enumerate}[noitemsep,labelindent=\parindent,label=E\arabic*.,series=lpvaxioms,start=0,leftmargin=*,widest=E5.,align=left,ref=E\arabic*] 

\item\label[axiom]{exax:cl} Axioms of propositional classical logic.
\item\label[axiom]{exax:application} $t{:}(A \imp B) \imp (s{:}A \imp (t\cdot s){:}B)$
\item\label[axiom]{exax:ref} $t{:}A \imp A$
\item\label[axiom]{exax:posintsp} $t{:}A \imp !t{:}t{:}A$
\item\label[axiom]{exax:plus} $t{:}A \imp (s+t){:}A$, $t{:}A \imp (t+s){:}A$
\medskip
\item\label[axiom]{exax:bvK} $\bv(A \imp B) \imp (\bv A \imp \bv B)$
\item\label[axiom]{exax:interaction} $t{:}A \imp \bv A$

\end{enumerate}

\begin{enumerate}[labelindent=\parindent,label=R\arabic*.,leftmargin=*,widest=R3,align=left,noitemsep]

\item Modus Ponens 
\item Axiom Necessitation: \AX$\fCenter\proves A$\UI$\fCenter\proves c{:}A$\DP where $A$ is any axiom and $c$ is some proof constant. 

\end{enumerate}

\end{definition}

\begin{definition}[\lpv]\label{df:lpvaxioms} 
The system \lpv\ is \lpvm\ with the additional axiom:
\begin{enumerate}[resume*=lpvaxioms] 

\item\label[axiom]{exax:cssty} $\neg t{:} \bv \bot$ 

\end{enumerate}

\end{definition}

A \emph{constant specification}, $\mathcal{CS}$, is a set $\{c_1{:}A_1, c_2{:}A_2 \dots\}$ of formulas such that each $A_i$ is an axiom from the lists above, and each $c_i$ is a proof constant.
This set is generated by each use of the constant necessitation rule in an \lpvm\ or \lpv\ proof.
The axiom necessitation rule can be replaced with a `ready made' constant specification which is added to \lpvm\ or \lpv\ as a set of extra axioms.
For such a \cs\ let $\lpvm\text{-}\cs$ and $\lpv\text{-}\cs$ mean \lpvm\ and \lpv, respectively, minus the axiom necessitation rule plus the members of \cs\ as additional axioms.

A proof term, $t$, is called a \emph{ground term} if it contains no proof variables, but is built only from proof constants and operations on those constants. 

\lpvm\ and \lpv\ are able to internalise their own proofs, that is if \[A_1 \dots A_n, y_1{:}B_1 \dots y_n{:}B_n \ \proves F\] then for some term $p(x_1 \dots x_n, y_1 \dots y_n)$
\[x_1{:}A_1 \dots x_n{:}A_n, y_1{:}B_1 \dots y_n{:}B_n \ \allowbreak\proves \allowbreak p(x_1 \dots x_n, y_1 \dots y_n){:}F,\] see \cite{Artemov2001}.
As a consequence \lpvm\ and \lpv\ have the constructive necessitation rule: for some ground proof term $t$,

\begin{prooftree}
\AX$\fCenter\proves F$
\UI$\fCenter\proves t{:}F.$
\end{prooftree}

This yields in turn:
\begin{lemma}[$\bv$ Necessitation]\label{cor:V-nec} 
$\bv$-Necessitation \AX$~\fCenter\proves A$\UI$\fCenter\proves \bv A$\DP is derivable in \lpvm\ and \lpv.
\end{lemma}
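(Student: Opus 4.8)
The plan is to chain together two facts already available to us: the constructive necessitation rule, just derived above from internalisation, and the interaction axiom \cref{exax:interaction}. The derivation is direct, so the substantive work lies entirely in the preceding establishment of constructive necessitation; once that is in hand, $\bv$-Necessitation follows by a single application of the interaction axiom and modus ponens.

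Concretely, I would argue as follows. Suppose $\proves A$ in \lpvm\ (the argument for \lpv\ is identical, since \lpv\ extends \lpvm). By constructive necessitation there is a ground proof term $t$ with $\proves t{:}A$. Axiom \cref{exax:interaction} gives $\proves t{:}A \imp \bv A$, and modus ponens then yields $\proves \bv A$. Displayed as a derivation:
\begin{prooftree}
\AX$\fCenter\proves A$
\UI$\fCenter\proves t{:}A$
\AX$\fCenter\proves t{:}A \imp \bv A$
\BI$\fCenter\proves \bv A.$
\end{prooftree}

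There is no real obstacle to overcome here, provided constructive necessitation is securely in place; that is the only nontrivial ingredient, and it was obtained from the internalisation property of \lpvm\ and \lpv. The one point worth checking is that the term $t$ produced by constructive necessitation is genuinely a proof term of the system (a ground term), so that $t{:}A$ is a well-formed formula to which \cref{exax:interaction} applies — but this is guaranteed by the statement of constructive necessitation. The result then holds uniformly for both systems, and also for their constant-specification variants $\lpvm\text{-}\cs$ and $\lpv\text{-}\cs$, since the internalisation argument goes through relative to any \cs.
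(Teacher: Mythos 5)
Your proof is correct and is essentially the paper's own argument: apply constructive necessitation to obtain $\proves t{:}A$ for a ground term $t$, then use the interaction axiom $t{:}A \imp \bv A$ with modus ponens to conclude $\proves \bv A$. The extra remarks about groundedness and constant-specification variants are fine but add nothing beyond what the paper's proof already relies on.
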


\begin{proof}
Assume $\proves A$, then by constructive necessitation $\proves t{:} A$ for some ground proof term $t$, hence by \cref{exax:interaction} $\proves \bv A$.
\end{proof}

Note that the Deduction Theorem holds for both \lpvm\ and \lpv.

\section{Arithmetical Interpretation of \lpvm\ and \lpv}
\label{sec:arithmetic-semantics} 

We give an arithmetical interpretation of \lpvm\ and \lpv\ by specifying a translation of the formulas of \lpvm\ and \lpv\ into the language of Peano Arithmetic, \PA.
We assume that a coding of the syntax of \PA\ is given.
$n$ denotes a natural number and $\overline{n}$ the corresponding numeral. 
$\overline{\gn{F}}$ denotes the numeral of the G\"odel number of a formula $F$.
For readability we suppress the overline for numerals and corner quotes for the G\"odel number of formulas, and trust that the appropriate number or numeral, as context requires, can be recovered.
\footnote{E.g.\ by techniques found in \cite{Boolos1993} and \cite{Feferman1960}.} 

\begin{definition}[Normal Proof Predicate]\label{df:prf}
A \emph{normal proof predicate} is a provably $\Delta$ formula $\prf(x,y)$ such that for every arithmetical sentence $F$ the following holds:
\begin{enumerate}[noitemsep]
\item $\PA \proves F \BImp \text{for some } n \in \omega, \prf(n, F)$

\item A proof proves only a finite number of things; i.e.\ for every $k$ the set $T(k) = \{ l | \prf(k, l) \}$ is finite.
\footnote{I.e.\ $T(k)$ is the set of theorems proved by the proof $k$.}

\item Proofs can be joined into longer proofs; i.e.\ for any $k$ and $l$ there is an $n$ s.t.\ $T(k) \cup T(l) \sub T(n)$.
\end{enumerate}

\end{definition}

\begin{example}
An example of a numerical relation that satisfies the definition of $\prf(x,y)$ is the standard proof predicate $\mathsf{Proof}(x, y)$ the meaning of which is
\begin{center}
\emph{``x is the G\"odel number of a derivation of a formula with the G\"odel number y''.}
\end{center}

\end{example}

\begin{theorem}\label{thm:prffunctions}
For every normal proof predicate $\prf(x,y)$ there exist recursive functions $\mathsf{m}(x,y)$, $\mathsf{a}(x,y)$ and $\mathsf{c}(x)$ such that for any arithmetical formulas $F$ and $G$ and all natural numbers $k$ and $n$ the following formulas hold:

\begin{enumerate}[noitemsep]
\item $(\prf(k, F \imp G) \land \prf(n, F)) \imp \prf(\mathsf{m}(k, n), G)$

\item $\prf(k, F) \imp \prf(\mathsf{a}(k, n), F)$, \ \  $\prf(n, F) \imp \prf(\mathsf{a}(k, n), F)$

\item $\prf(k, F) \imp \prf(\mathsf{c}(k), \prf(k, F)).$
\end{enumerate}

\end{theorem}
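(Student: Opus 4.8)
The plan is to realize each of the three operations as a total recursive function defined by an unbounded search (minimisation) over a decidable matrix, with termination guaranteed by the three normality conditions of \cref{df:prf}. Throughout I will use that $\prf$ is provably $\Delta$, so that $\prf(k,l)$ is a decidable relation and, crucially, that \PA\ proves every true closed instance $\prf(k,l)$ (provable $\Delta$-completeness). I will also use that each $T(k)$ is finite, so that any quantification over the theorems proved by a fixed code ranges, in effect, over a finite set.

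For the sum $\mathsf{a}$, the construction is immediate from condition~3: for given $k,n$ take $\mathsf{a}(k,n)$ to be the least $m$ such that $T(k) \cup T(n) \sub T(m)$. Condition~3 guarantees that such an $m$ exists, and the defining relation is decidable since $\prf$ is decidable and $T(k), T(n)$ are finite; hence $\mathsf{a}$ is total recursive and, by construction, satisfies clause~2. For the application $\mathsf{m}$, I first collect the finite set $C(k,n) = \{G : \exists F\,(\prf(k, F\imp G) \land \prf(n, F))\}$, which is finite because $T(k)$ and $T(n)$ are. For each $G \in C(k,n)$, condition~1 gives $\PA \proves F \imp G$ and $\PA \proves F$, so $\PA \proves G$ by modus ponens, and condition~1 again yields a code proving $G$; condition~3 lets me amalgamate these finitely many codes into a single one. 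Accordingly I define $\mathsf{m}(k,n)$ to be the least $m$ with $C(k,n) \sub T(m)$, which is total recursive for the same reasons as above and validates clause~1.

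The delicate step, and the one I expect to be the main obstacle, is the proof checker $\mathsf{c}$, since $\mathsf{c}(k)$ must be a single code that proves $\prf(k,F)$ for every $F$ proved by $k$, with no dependence on $F$. Here is where provable $\Delta$-completeness is indispensable: whenever $\prf(k,F)$ holds it is a true $\Delta$-sentence, so $\PA \proves \prf(k,F)$, and a code proving it exists by condition~1. Because $T(k)$ is finite there are only finitely many sentences $\prf(k,F)$ to be proved, and condition~3 again amalgamates their proofs; I then take $\mathsf{c}(k)$ to be the least $m$ proving all of them, obtaining clause~3.

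The care required is twofold: ensuring the searches are genuinely effective (which rests on the decidability of $\prf$ together with the finiteness of each $T(k)$, so that the subset and membership conditions reduce to finite tests), and confirming that the single code $\mathsf{c}(k)$ uniformly witnesses clause~3 for all relevant $F$. Both ultimately reduce to the normality conditions, but it is the passage from the semantic truth of $\prf(k,F)$ to its \PA-provability that carries the real content and makes the checker the analogue of the positive-introspection operation.
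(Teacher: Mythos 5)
Your three constructions are, in outline, the standard ones (and essentially those behind the proof the paper defers to in \cite{Artemov2001}), but the effectiveness claim on which all three rest does not follow from \cref{df:prf} as stated, and this is a genuine gap rather than a presentational one. You argue that the searches are recursive ``since $\prf$ is decidable and $T(k)$, $T(n)$ are finite, so that the subset and membership conditions reduce to finite tests.'' Decidability of $\prf$ plus finiteness of $T(k)$ gives a decision procedure for each individual question $l \in T(k)$, but it does not give you the finite set $T(k)$ itself as a computable object uniformly in $k$: to ``reduce to finite tests'' you must first know \emph{which} finitely many tests to run, i.e., you need a computable bound on, or a canonical index for, $T(k)$. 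Without that, a condition such as $T(k) \cup T(n) \subseteq T(m)$ is $\Pi_1$ and not decidable, and your definitions of $\mathsf{a}$, of $\mathsf{m}$ (via the set $C(k,n)$), and of $\mathsf{c}$ (via the list of sentences $\prf(k,F)$ for $F \in T(k)$) all presuppose having the elements of $T(k)$ in hand.

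The gap cannot be closed from the three conditions of \cref{df:prf} alone: one can construct a provably $\Delta$ predicate satisfying all of them for which no recursive $\mathsf{a}$ exists. Roughly, add to a standard multi-conclusion proof predicate extra codes $2e+1$ proving $0=0$ together with a sentence recording the exact halting time of machine $e$ on input $e$ (decidable, finite theorem sets, and joinable through the standard codes, though not effectively); any recursive join function would then let one extract, from the code $\mathsf{a}(2e+1,2e_0+1)$, a finite bound on the possible halting times of $e$, deciding the halting problem. What is missing is the clause, present in Artemov's definition of normal proof predicate in \cite{Artemov2001} but dropped in \cref{df:prf} here, that the map $k \mapsto T(k)$ (as a canonical index of a finite set) is computable. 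With that clause restored, your argument goes through essentially verbatim: $T(k)$ and $T(n)$ can be listed, $C(k,n)$ is computable, each search terminates by conditions 1 and 3, and the completeness step you use for $\mathsf{c}$ (a true $\Delta$ sentence $\prf(k,F)$ is provable, since its $\Sigma$ form is true and \PA\ is $\Sigma_1$-complete) is correct. So the repair is to invoke (or restore to the definition) the computability of $T$, not to change your constructions.
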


\begin{proof}
See \cite{Artemov2001}.
\end{proof}

\begin{definition}[Verification Predicate for \lpvm]\label{df:ver}
A \emph{verification predicate} is a provably $\Sigma$ formula $\ver(x)$ satisfying the following properties, for arithmetical formulas $F$ and $G$:
\begin{enumerate}[noitemsep,series=verlpvm]
\item $\PA \proves \ver(F \imp G) \imp (\ver(F) \imp \ver(G))$
\item For each \emph{n}, $\PA \proves  \prf(n,F) \imp \ver(F)$.
\end{enumerate}

\end{definition}

These are properties which a natural notion of verification satisfies.

Let $\bew(x)$ be the standard provability predicate,
\footnote{With the standard multi-conclusion proof predicate  in which a proof $p$ is a proof of $F$ if $F$ occurs somewhere in $p$.} 
and $\conpa$ be the statement which expresses that \PA\ is consistent, i.e.\ $\neg\bew(\bot)$.
$\neg\conpa$ correspondingly is $\bew(\bot)$.

\begin{example}\label{ex:vermeaning} \ The following are examples of a verification predicate $\ver(x)$:
\begin{enumerate}[]
\item ``Provability in \PA'', i.e.\ $\ver(x) = \bew(x)$; for a formula $F$ $\ver(F)$ is $\exists x \prf(x, F)$.

\item ``Provability in \PA\ + \sft{Con}(\PA)'' i.e.\ $\ver(x) = \bew(\conpa \imp x)$; one example of a formula for which $\ver(x)$ holds in this sense is just the formula $\conpa$.
Such verification is capable of verifying propositions not provable in \PA.

\item ``Provability in \PA\ + $\neg$\sft{Con}(\PA)'' i.e.\ $\ver(x) = \bew(\neg\conpa \imp x)$; an example of a verifiable formula which is not provable in \PA, is the formula $\neg\conpa$.
Such verification is capable of verifying false propositions.

\item $\top$, i.e.\ $\ver(x) = \top$; that is for any formula $F$ $\ver(F) = \top$, hence any $F$ is verified.  
\end{enumerate}

\end{example}

\begin{lemma}
$\PA \proves F \ \Imp \ \PA \proves \ver(F)$.
\end{lemma}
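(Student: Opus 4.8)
The plan is to chain the defining clauses of the normal proof predicate and of the verification predicate together, using the $\Sigma$-completeness of $\PA$ to pass from truth to provability. First I would assume $\PA \proves F$. By the first clause of \cref{df:prf}, this yields a specific natural number $n \in \omega$ for which $\prf(n, F)$ holds.

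The crux is to turn this external truth into an internal derivation. Because $\prf(x,y)$ is \emph{provably} $\Delta$, the sentence $\prf(n, F)$ obtained by substituting the numerals for $n$ and $\gn{F}$ is a true decidable sentence, and is therefore provable: $\PA \proves \prf(n, F)$. This is the single point at which the requirement that $\prf$ be provably $\Delta$ (rather than merely $\Delta$ in the standard model) is used essentially, and it is exactly an instance of the provable $\Sigma$-completeness of arithmetic.

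With $\PA \proves \prf(n, F)$ secured, I would instantiate the second clause of \cref{df:ver} at the very same $n$, giving $\PA \proves \prf(n, F) \imp \ver(F)$, and then conclude $\PA \proves \ver(F)$ by modus ponens. I expect the one genuine subtlety to be this middle step: one must take care that it is the provable $\Delta$-ness of $\prf$ that licenses moving from the truth of $\prf(n, F)$ to its $\PA$-provability, since otherwise the existence of $n$ supplied by \cref{df:prf} would only be a statement about the standard model and not something $\PA$ can reproduce. Beyond that, the argument is a routine combination of the two definitions.
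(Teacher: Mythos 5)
Your proof is correct and follows essentially the same route as the paper's: obtain $n$ with $\prf(n,F)$ true from clause 1 of \cref{df:prf}, pass to $\PA \proves \prf(n,F)$, and conclude via clause 2 of \cref{df:ver}. The only difference is that you make explicit the justification (provable $\Delta$-ness of $\prf$, i.e.\ $\Sigma$-completeness) for the step from truth of $\prf(n,F)$ to its provability, which the paper compresses into a single ``hence''---a worthwhile clarification, but the same argument.
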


\begin{proof}
Assume $\PA \proves F$, then by \cref{df:prf} there is an $n$ such that $\prf(n,F)$ is true, hence $\PA \proves \prf(n, F)$, and by \cref{df:ver} part 2 $\PA \proves \ver(F)$.
\end{proof}

We now define an interpretation of the language of \lpvm\ into the language of Peano Arithmetic. 
An arithmetical interpretation takes a formula of \lpvm\ and returns a formula of Peano Arithmetic; we show the soundness of such an interpretation, if $F$ is valid in \lpvm\ then for any arithmetical interpretation $^*$ $F^*$ is valid in \PA.
\footnote{A corresponding completeness theorem is left for future work, as is the development of a system with explicit verification terms, in addition to proof terms, realising the verification modality of \sivvm\ or \sivv.} 

\begin{definition}[Arithmetical Interpretation for \lpvm]\label{df:lparithmeticintr}
An \emph{arithmetical interpretation} for \lpvm\ has the following items:

\begin{itemize}[noitemsep]
\item A normal proof predicate, \prf, with the functions \sft{m}($x,y$), \sft{a}($x,y$) and \sft{c}($x$) as in \cref{df:prf} and \cref{thm:prffunctions};

\item A verification predicate, \ver, satisfying the conditions in \cref{df:ver};

\item An evaluation of propositional letters by sentences of \PA;

\item An evaluation of proof variables and constants by natural numbers.

\end{itemize}

An arithmetical interpretation is given inductively by the following clauses:

\begin{minipage}{0.5\textwidth}
\begin{align*}
(p)^* &= p \text{ an atomic sentence of } \PA \\
\bot^* &= \bot \\
(A \land B)^* &= A^* \land B^* \\
(A \lor B)^* &= A^* \lor B^* \\
(A \imp B)^* &= A^* \imp B^* 
\end{align*} 
\end{minipage}
\begin{minipage}{0.5\textwidth}
\begin{align*}
(t\cdot s)^* &= \mathsf{m}(t^*, s^*) \\
(t+s)^* &= \mathsf{a}(t^*, s^*) \\
(!t)^* &= \mathsf{c}(t^*) \\
(t{:}F)^* &= \prf(t^*, F^*) \\
(\bv F)^* &= \ver(F^*) 
\end{align*} 
\end{minipage}

\end{definition}

Let $X$ be a set of \lpvm\ formulas, then $X^*$ is the set of all $F^*$'s such that $F \in X$.
For a constant specification, $\mct{CS}$, a \emph{$\mct{CS}$-interpretation} is an \mbox{interpretation $^*$} such that all formulas from $\mct{CS}^*$ are true.
An \lpvm\ formula is \emph{valid} if $F^*$ is true under all interpretations $^*$. 
$F$ is \emph{provably valid} if $\PA \proves F^*$ under all interpretations $^*$. 
Similarly, $F$ is \emph{valid under constant specification $\mct{CS}$} if $F^*$ is true under all $\mct{CS}$-interpretations, and $F$ is \emph{provably valid under constant specification $\mct{CS}$} if $\PA \proves F^*$ under any $\mct{CS}$-interpretation $^*$.

\begin{theorem}[Arithmetical Soundness of \lpvm]\label{thm:arithmetic-sound-lpvm}
For any \mbox{$\mct{CS}$-interpretation $^*$} with a verification predicate as in \cref{df:ver} any \lpvm-\cs\ theorem, $F$, is provably valid under constant specification $\mct{CS}$:
\[ \lpvm\text{-}\mct{CS} \proves F \ \Imp \ \PA \proves F^*. \]

\end{theorem}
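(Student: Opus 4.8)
The plan is to argue by induction on the length of a derivation of $F$ in $\lpvm\text{-}\cs$, having fixed an arbitrary $\cs$-interpretation $^*$. Since $^*$ commutes with the propositional connectives, the translation of any instance of \cref{exax:cl} is a propositional tautology and hence provable in \PA, and the rule of Modus Ponens is immediately preserved: if $\PA \proves A^* \imp B^*$ and $\PA \proves A^*$ then $\PA \proves B^*$. The three explicit-proof axioms governing operations on proof terms reduce directly to \cref{thm:prffunctions}. Indeed, unwinding the interpretation clauses $(t \cdot s)^* = \mathsf{m}(t^*, s^*)$, $(!t)^* = \mathsf{c}(t^*)$, and $(t + s)^* = \mathsf{a}(t^*, s^*)$, the translations of \cref{exax:application}, \cref{exax:posintsp}, and \cref{exax:plus} are precisely the arithmetical implications furnished by parts 1, 3, and 2 of \cref{thm:prffunctions}, each of which holds provably in \PA.

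The two verification axioms are handled directly by \cref{df:ver}. The translation of \cref{exax:bvK} is $\ver(A^* \imp B^*) \imp (\ver(A^*) \imp \ver(B^*))$, which is part 1 of \cref{df:ver}, and the translation of the interaction axiom \cref{exax:interaction} is $\prf(t^*, A^*) \imp \ver(A^*)$, an instance of part 2 of \cref{df:ver} with $n = t^*$. Both are asserted to be \PA-provable by the very definition of a verification predicate, so nothing further is needed for these cases.

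The crux, and the step I expect to be the main obstacle, is the reflection axiom \cref{exax:ref} together with the members of the constant specification, since neither is the translation of a single schema of \PA. The key observation is that $^*$ evaluates every proof term to a natural number, so $(t{:}A)^* = \prf(t^*, A^*)$ is a closed $\Delta$ sentence (its arguments are numerals), and \PA\ decides every such sentence. For \cref{exax:ref}: if $\prf(t^*, A^*)$ is true, then by part 1 of \cref{df:prf} the formula $A^*$ is a theorem of \PA, so $\PA \proves \prf(t^*, A^*) \imp A^*$ because its consequent is provable; if $\prf(t^*, A^*)$ is false, then, being $\Delta$, its negation is provable, so $\PA \proves \prf(t^*, A^*) \imp A^*$ because its antecedent is refutable. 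Either way the translation is provable. For a constant-specification axiom $c_i{:}A_i$, the defining property of a $\cs$-interpretation makes $\prf(c_i^*, A_i^*)$ true, and once more $\Delta$-decidability upgrades truth to \PA-provability, so every member of $\cs^*$ is provable and the base case of the induction is complete.

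I would stress that this decidability argument is exactly what separates the proof modality from the verification modality. Because $\ver$ is only provably $\Sigma$, the same ``\PA\ decides it'' move is unavailable for the $\bv$ axioms; but it is also unnecessary, since \cref{df:ver} hands us \PA-provability of \cref{exax:bvK} and \cref{exax:interaction} outright. Thus the entire induction goes through, establishing $\lpvm\text{-}\cs \proves F \Imp \PA \proves F^*$.
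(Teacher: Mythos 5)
Your proof is correct and takes essentially the same route as the paper's: induction on derivations in $\lpvm\text{-}\cs$, with the two verification cases (\cref{exax:bvK} and \cref{exax:interaction}) discharged immediately by \cref{df:ver}. The extra detail you supply for the \LP\ cases --- matching the operation axioms to \cref{thm:prffunctions} and using provable $\Delta$-decidability of closed $\prf$-instances to handle \cref{exax:ref} and the members of $\cs^*$ --- is precisely the content of the argument in \cite{Artemov2001} that the paper cites rather than reproduces.
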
 

\begin{proof}
By induction on derivations in \lpvm.
The cases of the \LP\ axioms are proved in \cite{Artemov2001}.

\begin{case}[$\bv(A \imp B) \imp (\bv A \imp \bv B)$]

\[
[\bv(A \imp B) \imp (\bv A \imp \bv B)]^* \equiv \ver(F \imp G) \imp (\ver(F) \imp \ver(G)). \]

But $\PA \proves \ver(F \imp G) \imp (\ver(F) \imp \ver(G))$ by \cref{df:ver}.
\end{case}

\begin{case}[$t{:}F \imp \bv F$]
\[ [t{:}F \imp \bv F]^* \equiv \prf(t^*, F^*) \imp \ver(F^*).\]

Likewise $\PA \proves \prf(t^*, F^*) \imp \ver(F^*)$ holds by \cref{df:ver}.
\end{case}

\end{proof}

This arithmetical interpretation can be extended to \lpv.
Everything is as above except to \cref{df:ver} we add the following item:

\begin{definition}[Verification Predicate for \lpv]\label{df:verlpv} \
\begin{enumerate}[noitemsep,resume=verlpvm]
\item for any \emph{n}, $\PA \proves \neg\prf(n, \ver(\bot)).$
\end{enumerate}

\end{definition}

1--3 of \cref{ex:vermeaning} remain examples of a verification predicate which also satisfies the above consistency property.
In each case respectively 
$\ver(\bot)$ is
\begin{enumerate}[noitemsep,series=ver]
\item $\bew(\bot)$
\item $\bew(\neg\bew(\bot) \allowbreak \imp \allowbreak \bot)$, i.e.\ $\bew(\neg\conpa)$
\item $\bew(\neg\neg\bew(\bot) \imp \bot)$, i.e.\ $\bew(\conpa)$.
\end{enumerate}
All of these are false in the standard model of \PA, and hence not provable in \PA, hence for each $n$ $\PA \proves \neg \prf(n, \ver(\bot))$.

4. $\ver(\bot) = \top$, is not an example of a verification predicate for \lpv\ in the sense of \cref{df:verlpv}: $\ver(\bot)$ would be provable in \PA, and hence there would be an $n$ for which $\PA \proves \prf(n,\ver(\bot))$ holds, which contradicts \cref{df:verlpv}.

\begin{theorem}[Arithmetical Soundness of \lpv]\label{thm:arithmetic-sound-lpv}
For any \mbox{$\mct{CS}$-interpretation $^*$} with a verification predicate as in \cref{df:verlpv}, if $F$ is an \lpv-\cs\ theorem then it is provably valid under constant specification $\mct{CS}$:
\[ \lpv\text{-}\cs \proves F \ \Imp \ \PA \proves F^*. \]

\end{theorem}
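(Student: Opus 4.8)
The plan is to reuse the induction on derivations from the proof of \cref{thm:arithmetic-sound-lpvm}, since \lpv\ differs from \lpvm\ only by the single additional axiom schema \cref{exax:cssty}, namely $\neg t{:}\bv\bot$. Working now with a $\cs$-interpretation $^*$ whose verification predicate satisfies the extra condition of \cref{df:verlpv}, every case treated in \cref{thm:arithmetic-sound-lpvm} --- the \LP\ axioms (from \cite{Artemov2001}), the normality axiom \cref{exax:bvK}, the interaction axiom \cref{exax:interaction}, Modus Ponens, and Axiom Necessitation under $\cs$ --- goes through unchanged, because those arguments never used anything beyond properties 1--2 of \cref{df:ver}, which still hold. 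The only genuinely new base case is \cref{exax:cssty}.

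For that case I would unfold the interpretation directly. Using $\bot^* = \bot$ and $(\bv F)^* = \ver(F^*)$ together with $(t{:}F)^* = \prf(t^*, F^*)$, the translation is
\[ [\neg t{:}\bv\bot]^* \equiv \neg\prf(t^*, \ver(\bot)). \]
The term $t^*$ is computed from the natural numbers assigned to the proof variables and constants occurring in $t$ by applying the total recursive functions $\mathsf{m}$, $\mathsf{a}$ and $\mathsf{c}$ of \cref{thm:prffunctions}, so $t^*$ denotes a determinate natural number $n$. The consistency clause of \cref{df:verlpv} asserts exactly that $\PA \proves \neg\prf(n, \ver(\bot))$ for every $n$; instantiating at $n = t^*$ yields $\PA \proves [\neg t{:}\bv\bot]^*$, completing the new case and hence the induction.

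The step demanding the most attention is recognising that \cref{df:verlpv} is precisely the hypothesis engineered to close this case: the condition is schematic in $n$, and soundness requires it to hold for whatever number $t^*$ happens to evaluate to, which is guaranteed because every proof term is interpreted by a closed arithmetical term denoting a single natural number. This is also what separates the genuine \lpv-verification predicates from the degenerate one of item~4 of \cref{ex:vermeaning}: there $\ver(\bot) = \top$ is \PA-provable, so some $\prf(n, \ver(\bot))$ holds and $\neg\prf(t^*, \ver(\bot))$ fails, exactly matching the observation already recorded immediately after \cref{df:verlpv}. Beyond this, no new calculation is needed, so I expect the proof to be short.
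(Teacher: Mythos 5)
Your proposal is correct and follows exactly the paper's own strategy: extend the induction of \cref{thm:arithmetic-sound-lpvm} by the single new case for \cref{exax:cssty}, whose translation $\neg\prf(t^*,\ver(\bot))$ is \PA-provable by the added clause of \cref{df:verlpv}. Your additional remarks (that $t^*$ evaluates to a definite natural number, and the contrast with item~4 of \cref{ex:vermeaning}) just make explicit what the paper leaves implicit.
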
 

\begin{proof}

Add to the proof of \cref{thm:arithmetic-sound-lpvm} the following case:

\begin{ccase}{3}[$\neg t{:}\bv\bot$]
\[
[\neg t{:}\bv\bot]^* \equiv \neg \prf(n, \ver(\bot)).
\]

$\PA \proves \neg \prf(n, \ver(\bot))$ holds by \cref{df:verlpv}.
\end{ccase}

\end{proof}

\section{Sequent Systems for \sivvm\ and \sivv}
\label{sec:sequent}

We give a sequent formulation of \sivvm\ and \sivv.
We will denote the sequent formulations by \sivvmg, \sivvg\ respectively.

A sequent is a figure, $\gg \Imp \gd$, in which $\gg, \gd$ are multi-sets of formulas. 

\begin{definition}[\sivvmg ]\label{df:sivvmg}

The axioms for the system \sivvmg\ are: 
\paragraph{Axioms}

\begin{center}
 \begin{tabular}{lllll}

  \AxiomC{$P \Imp P$, $P$~atomic}
  \DisplayProof
& &  \AxiomC{$ \bot \Imp$}
   \DisplayProof    
 \end{tabular}
\end{center}

The structural and propositional rules are those of the system \textbf{G1c} from \cite{Troelstra2000}. 
The modal rules are:

\paragraph{$\bx$-Rules}

\begin{center}
 \begin{tabular}{llll}

\AxiomC{$\gg, X \Imp \gd$}
\RightLabel{$(\bx \Imp)$}
  \UnaryInfC{$\gg, \bx X \Imp  \gd$}
  \DisplayProof
& \phantom{1234} & \phantom{1234}
				    & \AxiomC{$\bx \gg \Imp X$}
				      \RightLabel{$(\Imp \bx)$}
				      \UnaryInfC{$ \bx \gg \Imp \bx X $}
				      \DisplayProof

\end{tabular}
\end{center}

\medskip

\paragraph{$\bv$-Rule}

\begin{center}
 \begin{tabular}{lll}

&
				      & \AxiomC{$\bx \Theta, \gg \Imp X$}
					\RightLabel{$(\Imp \bv)$}
					\UnaryInfC{$\bx \Theta, \bv \gg \Imp \bv X$}
					\DisplayProof
\end{tabular}
\end{center}

\medskip

\paragraph{Interaction-Rule}

\begin{center}
 \begin{tabular}{l}

\AxiomC{$\gg, \bv X \Imp \gd$}
\RightLabel{$(\bv/\bx \Imp)$}
  \UnaryInfC{$\gg, \bx X \Imp  \gd$}
  \DisplayProof
\end{tabular}
\end{center}

\end{definition}

\begin{definition}[\sivvg\ Rules]\label{df:sivvg}

The system \sivvg\ is the system \sivvmg\ with the additional axiom:

\paragraph{Weak Inconsistency Elimination}

\begin{center}
 \begin{tabular}{l}

\AxiomC{$\gg \Imp \bx\bv\bot $}
\RightLabel{$( \Imp \Box{\bf V})$}
  \UnaryInfC{$\gg \Imp $}
  \DisplayProof

\end{tabular}
\end{center}

\end{definition}

Soundness can be shown by induction on the rules of \sivvmg\ and \sivvg.
Completeness and cut-elimination can be shown in a manner similar to that of \cite{Artemov2006}.
\footnote{See also \cite{Mints2000} for another example of the method.} 

\section{Realisation of \sivvm\ and \sivv}
\label{sec:realisation}

Here we show that each $\bx$ in an \sivvm\ or \sivv\ theorem can be replaced with a proof term so that the result is a theorem of \lpvm\ or \lpv, and hence that \ielm\ and \iel\ each have a proof interpretation.
The converse, that for each \lpvm\ or \lpv\ theorem if all the proof terms are replaced with $\bx$'s the result is a theorem of \sivvm\ or \sivv\ also holds.

\begin{definition}[Forgetful Projection]
The \emph{forgetful projection}, $F^0$ of an \lpvm\ or \lpv\ formula is the result of replacing each proof term in $F$ with a $\bx$.
\end{definition}

\begin{theorem}
  $\lpvm, \lpv \proves F \ \Imp \ \sivvm, \sivv \proves F^0$ respectively.
\end{theorem}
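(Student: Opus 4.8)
The plan is to argue by induction on the length of the derivation of $F$ in \lpvm\ (respectively \lpv), showing at each step that the forgetful projection of every line is a theorem of \sivvm\ (respectively \sivv). The structural fact that makes the induction go through is that $(\cdot)^0$ commutes with the Boolean connectives and with $\bv$, since it only rewrites the proof-term prefixes $t{:}$ into $\bx$; in particular $(A \imp B)^0 = A^0 \imp B^0$ and $(\bv A)^0 = \bv A^0$. This immediately handles Modus Ponens: if the projections of $A$ and $A \imp B$ are theorems, then so is $A^0 \imp B^0$, whence $B^0$ follows by Modus Ponens in the target system.

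For the base case I would check that the projection of each axiom of \lpvm\ is provable in \sivvm. The propositional axioms (\cref{exax:cl}) contain no proof terms and so project to themselves; they are available since \sivvm\ contains \siv. The projection of \cref{exax:application} is $\bx(A^0 \imp B^0) \imp (\bx A^0 \imp \bx B^0)$, an instance of the $\bx$-distribution schema of \siv; that of \cref{exax:ref} is $\bx A^0 \imp A^0$ (factivity of $\bx$) and that of \cref{exax:posintsp} is $\bx A^0 \imp \bx\bx A^0$ (positive introspection of $\bx$), both theorems of \siv. Since both $s+t$ and $t+s$ erase to $\bx$, the projection of \cref{exax:plus} collapses to $\bx A^0 \imp \bx A^0$, which is trivial. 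Finally, the projections of \cref{exax:bvK} and \cref{exax:interaction} are literally \cref{ax:bvK} and \cref{ax:inter} of \sivvm. For \lpv\ there is one extra axiom, \cref{exax:cssty}, whose projection is $\neg \bx \bv \bot$, i.e.\ exactly \cref{ax:cssty} of \sivv.

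The only rule beyond Modus Ponens is Axiom Necessitation: from a proof of an axiom $A$ one infers $c{:}A$. Its projection requires deriving $\bx A^0$ in \sivvm. But $A$ is an axiom, so by the base case $A^0$ is already a theorem of \sivvm, and an application of $\bx$-Necessitation in \sivvm\ yields $\bx A^0$, as needed. This closes the induction, and the same argument, augmented with the single extra axiom case above, establishes the claim for \lpv\ and \sivv.

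I do not anticipate a genuine obstacle: this is the ``soundness'' half of realisation, in which erasing the proof terms can only discard explicit information, and every explicit axiom was designed to project onto a modal principle already present in the target. The one place that warrants care is the interplay between Axiom Necessitation and $\bx$-Necessitation, since it is here that one must invoke the previously verified base case rather than reason directly about the projected formula; everything else is a mechanical verification that $(\cdot)^0$ sends axioms to axioms or to easy theorems.
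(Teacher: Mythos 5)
Your proof is correct and follows essentially the same route as the paper's: induction on \lpvm\ (resp.\ \lpv) derivations, checking that each explicit axiom projects to a theorem of \sivvm\ (resp.\ \sivv) and that the rules are preserved, with Axiom Necessitation handled via $\bx$-Necessitation. The paper merely states the axiom projections and dismisses the rules as obvious, so your treatment is the same argument written out in fuller detail (and it correctly fixes the paper's slip of saying the induction is on \sivvm\ derivations rather than \lpvm\ derivations).
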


\begin{proof}
  By induction on \sivvm\ derivations. 
  The forgetful projections of \cref{exax:application,exax:plus,exax:posintsp,exax:ref,exax:interaction} are $\bx(A \imp B) \imp (\bx A \imp \bx B)$, $\bx A \imp A$, $\bx A \imp \bx \bx A$, $\bx A \imp \bx A$ and $\bx A \imp \bv A$ respectively, which are all provable in \sivvm.
The forgetful projection of $\neg t{:}\bv \bot$ is $\neg\bx\bv\bot$ which is provable in \sivv.
The rules are obvious.
\end{proof}

\begin{definition}[Realisation]
A \emph{realisation}, $F^r$, of an \sivvm\ or \sivv\ formula $F$ is the result of substituting a proof term for each $\bx$ in $F$, such that if $\sivvm,\sivv \proves F$ then $\lpvm,\lpv \proves F^r$ respectively.
\end{definition}

\begin{definition}[Polarity of Formulas]
 Occurrences of $\bx$ in $F$ in $G \imp F$, $F \land G$, $G \land F$, $F \lor G$, $G \lor F$, $\bx G$ and $\gg \Imp \gd, F$ have the same polarity as the occurrence of $\bx$ in $F$.

 Occurrences of $\bx$ in $F$ from $F \imp G$, $\neg F$ and $F, \gg \Imp \gd$ have the polarity opposite to that of the occurrence of $\bx$ in $F$.
\end{definition}

\begin{definition}[Normal Realisation]
A realisation $r$ is called \emph{normal} if all negative occurrences of $\bx$ are realised by proof variables.
\end{definition}

The informal reading of the \siv\ provability modality $\bx$ is existential, $\bx F$ means `there is a proof of $F$' (as opposed to the Kripke semantic reading which is universal, i.e.\ `$F$ holds in all accessible states'), normal realisations are the ones which capture this existential meaning, see \cite{Artemov2001}.  

The realisation theorem, \cref{thm:sivvmrealisation}, shows that if a formula $F$ is a theorem of \sivvm\ then there is a substitution of proof terms for every $\bx$ occurring in $F$ such that the result is a theorem of \lpvm.
This means that every $\bx$ in \sivvm\ can be thought of as standing for a (possibly complex) proof term in \lpvm, and hence, by \cref{thm:arithmetic-sound-lpvm}, implicitly represents a specific proof in \PA.
The proof of the realisation theorem consists in a procedure by which such a proof term can be built, see \cite{Artemov1995,Artemov2001,Kuznets2006,Fitting2005}.
Given a (cut-free) proof in \sivvmg\ we show how to assign proof terms to each of the $\bx$'s occurring in the \sivvmg\ proof so that each sequent in the proof corresponds to a formula provable in \lpvm; this is done by constructing a Hilbert-style \lpvm\ proof for the formula corresponding to each sequent, so as to yield the desired realisation.

Occurrences of $\bx$ in an \sivvmg\ derivation can be divided up into \emph{families} of related occurrences.
Occurrences of $\bx$ are related if they occur in related formulas of premises and conclusions of rules. 
A family of related occurrences is given by the transitive closure of such a relation. 
A family is called \emph{essential} if it contains at least one occurrence of $\bx$ which is introduced by the $(\Imp \bx)$ rule. 
A family is called \emph{positive (respectively negative)} if it consists of positive (respectively negative) occurrences of $\bx$. 
It is important to note that the rules of \sivvmg\ preserve the polarities of $\bx$. 
Any $\bx$ introduced by $(\Imp \bx)$ is positive, while $\bx$'s introduced by $(\bx \Imp)$ and the interaction rule are negative.


\begin{theorem}[\sivvm\ Realisation]\label{thm:sivvmrealisation}
  If $\sivvm \proves F$ then $\lpvm \proves F^r$ for some normal realisation~$r$.
\end{theorem}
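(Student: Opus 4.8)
The plan is to adapt Artemov's realisation algorithm for \siv\ in \LP\ (see \cite{Artemov1995,Artemov2001,Kuznets2006}) to the two-modality setting, treating $\bv$ as inert under realisation: only $\bx$'s are replaced by proof terms, while every $\bv$ is carried over unchanged. Since \sivvm\ and its cut-free sequent calculus \sivvmg\ prove the same theorems, I would first fix, by cut-elimination (\cref{sec:sequent}), a cut-free \sivvmg-derivation $\mathcal{D}$ of ${\Imp}\,F$. Then I would partition the occurrences of $\bx$ in $\mathcal{D}$ into \emph{families} and classify each as positive/negative and essential/inessential exactly as described before the statement. Every negative family and every inessential family is realised by a fresh proof variable, which already secures \emph{normality}. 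For each essential positive family $f$ — collecting occurrences introduced at one or more applications of $(\Imp\bx)$ — I would reserve one fresh \emph{provisional} variable per such introduction and provisionally realise $f$ by the $+$-sum of these variables, so that \cref{exax:plus} lets any one summand carry the weight at a given node.

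The induction then runs top-down through $\mathcal{D}$, maintaining the invariant that the realisation of each sequent $\gg\Imp\gd$, read as $\bigwedge\gg^r\imp\bigvee\gd^r$, is provable in \lpvm, while genuine terms are successively substituted for the provisional variables. At an application of $(\Imp\bx)$ introducing an occurrence of an essential positive family $f$, the realised premise of $\bx\gg\Imp X$ is \lpvm-provable, so by the internalisation/constructive-necessitation property of \lpvm\ (\cite{Artemov2001}) there is a term $p$, built from the variables realising $\bx\gg$, with $p{:}X^r$ derivable from the realised context; substituting $p$ for the corresponding provisional variable of $f$ and appealing to \cref{exax:plus} discharges this rule. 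The Boolean, structural and purely-$\bx$ cases (\cref{exax:application,exax:ref,exax:posintsp,exax:plus}) are verbatim those of Artemov's theorem and I would cite them. What is genuinely new are the two rules involving $\bv$.

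The interaction rule $(\bv/\bx\Imp)$, from $\gg,\bv X\Imp\gd$ to $\gg,\bx X\Imp\gd$, is easy: the introduced $\bx X$ is negative, hence realised by a fresh variable $y$ giving $y{:}X^r$ on the left, and since \cref{exax:interaction} yields $y{:}X^r\imp\bv X^r$, the realised premise $\gg^r,\bv X^r\Imp\gd^r$ at once delivers the realised conclusion. The crux — and the step I expect to be the main obstacle — is $(\Imp\bv)$, from $\bx\gt,\gg\Imp X$ to $\bx\gt,\bv\gg\Imp\bv X$, where the boxed context $\bx\gt$ is realised by variables $x_i{:}\gt_i^r$ while the plain context $\gg$ merely gains an outer $\bv$. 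Here the induction hypothesis gives \[ \lpvm \proves \Bigl(\textstyle\bigwedge_i x_i{:}\gt_i^r \land \bigwedge_j \gg_j^r\Bigr) \imp X^r, \] so $\bv$-Necessitation (\cref{cor:V-nec}) together with \cref{exax:bvK} yields \[ \lpvm \proves \bv\Bigl(\textstyle\bigwedge_i x_i{:}\gt_i^r \land \bigwedge_j \gg_j^r\Bigr) \imp \bv X^r. \] Composing with the aggregation law $\bigwedge_i\bv C_i \imp \bv\bigwedge_i C_i$ (derivable from \cref{exax:bvK} and \cref{cor:V-nec}) rewrites the antecedent, giving \[ \lpvm \proves \Bigl(\textstyle\bigwedge_i \bv(x_i{:}\gt_i^r) \land \bigwedge_j \bv\gg_j^r\Bigr) \imp \bv X^r. \] Finally, chaining positive introspection with interaction produces $x_i{:}\gt_i^r \imp\, !x_i{:}(x_i{:}\gt_i^r) \imp \bv(x_i{:}\gt_i^r)$ (by \cref{exax:posintsp} then \cref{exax:interaction}), so from the realised antecedent $\bigwedge_i x_i{:}\gt_i^r \land \bigwedge_j \bv\gg_j^r$ of the conclusion one recovers the antecedent above and concludes $\bv X^r$, which is exactly the realisation of $\bx\gt,\bv\gg\Imp\bv X$.

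Two final points would need checking. First, all of the $\bv$-arguments above are \emph{uniform} in the realisation of $\gt,\gg,X$, so they survive the later substitution of terms for provisional variables; the monotonicity of \lpvm-provability under replacing a variable that occurs only positively (inside a $+$-sum) by a term is precisely what makes the algorithm terminate with a fixed realisation, as in \cite{Artemov2001}. Second, neither $\bv$-rule introduces a positive occurrence of $\bx$, so normality is preserved throughout, and the realisation $r$ obtained at the root is normal, yielding $\lpvm\proves F^r$ as required.
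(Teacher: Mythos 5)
Your proposal is correct and follows essentially the same route as the paper: a cut-free \sivvmg\ derivation, families of $\bx$-occurrences with negative and inessential ones realised by fresh proof variables and essential ones by $+$-sums of provisional variables, Artemov's procedure cited for the propositional and $\bx$ cases, and the two $\bv$-rules treated as the only new cases (interaction via \cref{exax:interaction}, exactly as in the paper). Your explicit derivation for the $(\Imp \bv)$ case --- $\bv$-necessitation, distribution by \cref{exax:bvK}, aggregation $\bigwedge_i \bv C_i \imp \bv \bigwedge_i C_i$, and the chain $x{:}A \imp\, !x{:}x{:}A \imp \bv(x{:}A)$ from \cref{exax:posintsp,exax:interaction} --- is a correct filling-in of the implication that the paper asserts in one line without proof.
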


\begin{proof}
 
If $\sivvmg \proves F$ then there exists a cut-free sequent proof, $\mathcal{S}$, of the sequent $\ \Imp F$.
The realisation procedure described below (following \cite{Artemov1995,Artemov2001}) describes how to construct a normal realisation $r$ for any sequent in $\mathcal{S}$.

\textbf{Step 1.} In every negative family and non-essential positive family replace each occurrence of $\bx B$ by $x{:}B$ for a fresh proof variable $x$.

\textbf{Step 2.} Pick an essential family, $f$, and enumerate all of the occurrences of the rule $(\Imp \bx)$ which introduce $\bx$'s in this family.
Let $n_f$ be the number of such introductions.
Replace all $\bx$'s of family $f$ by the proof term $v_1 + \ldots + v_{n_{f}}$ where $v_i$ does not already appear as the result of a realisation. 
Each $v_i$ is called a \emph{provisional} variable which will later be replaced with a proof term.

After this step has been completed for all families of $\bx$ there are no $\bx$'s left in $\mathcal{S}$.  

\textbf{Step 3.} This proceeds by induction on the depth of a node in $\mathcal{S}$.
For each sequent in $\mathcal{S}$ we show how to construct an \lpvm\ formula, $F^r$, corresponding to that sequent, such that $\lpvm \proves F^r$.

The realisation of a sequent $\mathcal{G} = \gid$ is an \lpvm\ formula, $\mathcal{G}^r$, of the following form: 
\[
 A^r_1 \land \ldots \land A^r_n \imp B^r_1 \lor \ldots \lor B^r_m
\]
The $A^r$'s and $B^r$'s denote realisations already performed.
Let $\gg^r, \Theta^r$ stand for conjunctions of formulas and $\gd^r$ for disjunctions of formulas; $\gg^r$ prefixed with a $\bv$ stands for conjunctions of $\bv$'ed formulas, i.e.\ $\bv \gg^r_n = \bv A^r_1 \land \ldots \land \bv A^r_n$.
Similarly $\vec{x}{:}\Theta^r_p$ stands for $x_1{:}C^r_1 \land \ldots \land x_p{:}C^r_p$.

The cases realising the rules involving the propositional connectives and $\bx$ are shown in \cite{Artemov2001}
\footnote{The procedure described in \cite{Artemov2001} gives an exponential increase in the size of the derivation of the desired $F^r$.
\cite{Kuznets2006} describes a modification of the procedure which gives only a polynomial increase.} 
(including how to replace provisional variables with terms).
Let us check the rules involving $\bv$. 

\begin{case}[Sequent $\mathcal{G}$ is the conclusion of a $(\Imp \bv)$ rule: $\bx \Theta, \bv \gg \Imp \bv X $]

\
\[
\mathcal{G}^r = (\vec{x}{:} \Theta^r_p \land \bv\gg^r_n) \imp \bv X^r.
\]

Now $\lpvm \proves ((\vec{x}{:} \Theta^r_p \land \gg^r_n) \imp X^r) \Imp \lpvm \proves ((\vec{x}{:}  \Theta^r_p \land \bv\gg^r_n) \imp \bv X^r)$, hence by the induction hypothesis the realisation of the premise of the rule, $(\vec{x}{:}  \Theta^r_p \land\gg^r_n) \imp X^r$, is provable in \lpvm, and hence:
\[\lpvm \proves (\vec{x}{:} \Theta^r_p \land \bv\gg^r_n) \imp \bv X^r.\]

\end{case}

\medskip

\begin{case}[Sequent $\mathcal{G}$ is the conclusion of a $(\bv/\bx \Imp)$ rule: $\gg, \bx X \Imp  \gd$]
\[
\mathcal{G}^r = (\gg^r_n \land x{:} X^r) \imp \gd^r_m.
\]

Since $x{:}A \imp \bv A$ is provable in \lpvm\ we have that \[
 \lpvm \proves ((\gg^r_n \land \bv X^r) \imp \gd^r_m) \Imp \lpvm \proves ((\gg^r_n \land x{:}X^r) \imp \gd^r_m).\]
By the induction hypothesis the realisation of the formula corresponding to the premise of the rule, $(\gg^r_n \land \bv X^r) \imp \gd^r_m$, is provable, and hence:
\[
 \lpvm \proves (\gg^r_n \land x{:}X^r) \imp \gd^r_m.
\]

\end{case}

\textbf{Step 4.} After applying the above three steps each $\mct{G} \in \mct{S}$ has been translated into the language of \lpvm, and been shown to be derivable in \lpvm.
Hence for the formula corresponding to the root sequent, $\Imp F$, we have that 
\[
\lpvm \proves \top \imp F^r.
\]
Since $\lpvm \proves \top$ \[
\lpvm \proves F^r.
\]

Hence if $\sivvm \proves F$ there is a normal realisation $r$ such that $\lpvm \proves F^r$.  

\end{proof}

\begin{theorem}[\sivv\ Realisation]\label{thm:sivvrealisation}
  If $\sivv \proves F$ then $\lpv \proves F^r$ for some normal realisation~$r$.
\end{theorem}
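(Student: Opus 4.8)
The plan is to reuse the realisation procedure of \cref{thm:sivvmrealisation} almost verbatim, extending it only to account for the single rule by which \sivvg\ differs from \sivvmg\ (Weak Inconsistency Elimination) and to exploit the single axiom by which \lpv\ extends \lpvm\ (\cref{exax:cssty}). First I would invoke cut-elimination for \sivvg\ (\cref{sec:sequent}) to obtain, from $\sivv \proves F$, a cut-free \sivvg\ derivation $\mathcal{S}$ of the sequent $\Imp F$. Steps 1 and 2 of the procedure — assigning fresh proof variables to the negative and non-essential positive families, and provisional sums $v_1 + \ldots + v_{n_f}$ to the essential positive families — are carried out exactly as before. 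This is legitimate because the new rule introduces no new $\bx$ and preserves polarities (it merely deletes a positive succedent formula $\bx\bv\bot$), so the partition of $\bx$-occurrences into families and all the positive/negative bookkeeping is undisturbed.

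The induction of Step 3 then requires just one additional case, for a sequent $\mathcal{G} = (\gg \Imp)$ obtained by Weak Inconsistency Elimination from the premise $\gg \Imp \bx\bv\bot$. The displayed $\bx$ occurs positively, so the procedure assigns it some proof term $t$, and the realisation of the premise takes the form
\[
\gg^r_n \imp t{:}\bv\bot,
\]
which by the induction hypothesis is provable in \lpv. The realisation of the conclusion $\mathcal{G}$ is $\gg^r_n \imp \bot$, since an empty succedent realises to $\bot$. As \cref{exax:cssty} gives $\lpv \proves \neg t{:}\bv\bot$, i.e.\ $\lpv \proves t{:}\bv\bot \imp \bot$, for whatever term $t$ has been assigned, propositional reasoning yields
\[
\lpv \proves \gg^r_n \imp \bot,
\]
as required. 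Step 4 and the concluding argument ($\lpv \proves \top \imp F^r$, hence $\lpv \proves F^r$) then go through unchanged.

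I do not expect a genuine obstacle here: all the delicate parts of realisation — merging essential families, replacing provisional variables by $+$-terms, and the $(\Imp \bx)$ and $(\Imp \bv)$ cases — are already handled in the proof of \cref{thm:sivvmrealisation}, and the new rule reduces to a one-line propositional inference from the added axiom. The only point I would check carefully is that the consistency axiom is available for the exact term $t$ the procedure assigns to the positive $\bx$ in $\bx\bv\bot$; this is unproblematic precisely because \cref{exax:cssty} is a schema holding for \emph{every} proof term, so it imposes no constraint on how that family is realised.
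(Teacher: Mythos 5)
Your proposal is correct and matches the paper's proof essentially step for step: both add a single new case to Step 3 for Weak Inconsistency Elimination, realise the conclusion as $\gg^r_n \imp \bot$, and close the gap using the induction hypothesis on the premise together with \cref{exax:cssty} in the form $t{:}\bv\bot \imp \bot$. Your added care in treating the realising term $t$ as arbitrary (rather than a proof variable) and noting that the axiom is a schema over all terms is a harmless refinement of the same argument.
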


\begin{proof}

We simply add the following case to Step 3 of \cref{thm:sivvmrealisation}.
The rest is the same.

\begin{ccase}{3}[Sequent $\mathcal{G}$ is the conclusion of the Weak Inconsistency Elimination: $\gg \Imp \ \ \ $]
\[
\mathcal{G}^r = \gg^r_n \imp \bot.
\]

$\lpv \proves \gg^r_n \imp x{:}\bv\bot \Imp \lpv \proves \gg^r_n \imp \bot$, since $\lpv\proves x{:}\bv\bot \imp \bot$, hence by the induction hypothesis the realisation of the premise of the rule, $\gg^r_n \imp x{:}\bv\bot$, is provable in \lpv, and hence:
\[\lpv \proves \gg^r_n \imp \bot.\]

\end{ccase}

\end{proof}

We are finally in a position to show that the systems of intuitionistic epistemic logic, \ielm\ and \iel, do indeed have an arithmetical interpretation.

\begin{definition}\label{df:prfrealisable}
A formula of \ielm\ or \iel\ is called \emph{proof realisable} if $(tr(F))^r$ is \lpvm, respectively \lpv, valid under some normal realisation $r$. 
\end{definition}

It follows that \ielm\ and \iel\ are sound with respect to proof realisability.

\begin{theorem}\label{thm:prfrealisable}
If $\ielm, \iel\ \proves F$ then $F$ is proof realisable.
\end{theorem}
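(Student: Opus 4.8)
The plan is to obtain this as an immediate corollary by composing three results already established: the Embedding theorem (\cref{thm:embedding}), the Realisation theorems (\cref{thm:sivvmrealisation} and \cref{thm:sivvrealisation}), and the Arithmetical Soundness theorems (\cref{thm:arithmetic-sound-lpvm} and \cref{thm:arithmetic-sound-lpv}). The argument splits into the two parallel cases for \ielm\ and \iel, which are handled identically modulo the choice of the modal and explicit systems involved.

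First I would treat the \ielm\ case. Suppose $\ielm \proves F$. By the Embedding theorem (\cref{thm:embedding}), the G\"odel translation yields $\sivvm \proves tr(F)$. Applying the \sivvm\ Realisation theorem (\cref{thm:sivvmrealisation}) to the theorem $tr(F)$ then produces a normal realisation $r$ with $\lpvm \proves (tr(F))^r$. Finally, by the Arithmetical Soundness of \lpvm\ (\cref{thm:arithmetic-sound-lpvm}), every \lpvm\ theorem is provably valid, so $\PA \proves ((tr(F))^r)^*$ under every (\cs-)interpretation $^*$; since the theorems of \PA\ are true, $(tr(F))^r$ is valid. By \cref{df:prfrealisable} this is precisely what it means for $F$ to be proof realisable.

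The \iel\ case is the same argument with the consistent systems substituted throughout: $\iel \proves F$ gives $\sivv \proves tr(F)$ by \cref{thm:embedding}, then $\lpv \proves (tr(F))^r$ for some normal realisation $r$ by the \sivv\ Realisation theorem (\cref{thm:sivvrealisation}), and the validity of $(tr(F))^r$ in \lpv\ follows from the Arithmetical Soundness of \lpv\ (\cref{thm:arithmetic-sound-lpv}).

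Since the substantive work has already been carried out in the cited theorems, there is no real obstacle to overcome here; the only point requiring attention is bookkeeping — namely confirming that the realisation delivered by \cref{thm:sivvmrealisation} and \cref{thm:sivvrealisation} is \emph{normal}, so that the witness $r$ satisfies the normality requirement built into the definition of proof realisability. This is guaranteed directly by the statements of those theorems, so the composition goes through cleanly.
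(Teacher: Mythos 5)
Your proposal is correct and takes essentially the same route as the paper, which likewise proves the theorem by composing the embedding theorem (\cref{thm:embedding}) with the realisation theorems (\cref{thm:sivvmrealisation} and \cref{thm:sivvrealisation}). The only difference is that you explicitly add the final step of invoking arithmetical soundness (\cref{thm:arithmetic-sound-lpvm,thm:arithmetic-sound-lpv}) to pass from $\lpvm, \lpv \proves (tr(F))^r$ to validity of $(tr(F))^r$; the paper's proof stops at derivability and leaves that bridge to the remark immediately following the theorem, so your version is, if anything, slightly more careful about matching \cref{df:prfrealisable}.
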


\begin{proof}
By \cref{thm:embedding} if $\ielm, \iel \proves F$ then $\sivvm, \sivv \proves tr(F)$, respectively, and by \cref{thm:sivvmrealisation} and \cref{thm:sivvrealisation} if $\sivvm, \sivv \proves tr(F)$ then $\lpvm, \lpv \proves (tr(F))^r$ respectively.
\end{proof}

By \cref{thm:arithmetic-sound-lpvm,thm:arithmetic-sound-lpv} \lpvm\ and \lpv\ are sound with respect to their arithmetical interpretation, and hence by \cref{thm:prfrealisable} so are \ielm\ and \iel.

\section{Conclusion}

Intuitionistic epistemic logic has an arithmetical interpretation, hence an interpretation in keeping with its intended BHK reading. 
Naturally verification in Peano Arithmetic, as outlined above, is not  the only interpretation of verification for which the principles of intuitionistic epistemic logic are valid. 
\ielm\ and \iel\ may be interpreted as logics of the interaction between conclusive and non-conclusive evidence, e.g.\ mathematical proof vs.\ experimental confirmation, or observation vs.\ testimony.
The question about exact interpretations for other intuitive readings of these logics is left for further investigation.

%

\bibliographystyle{abbrvnat}
\renewcommand\bibname{References}

\end{document}